\providecommand*{\diff}%
{\@ifnextchar^{\DIfF}{\DIfF^{}}}
\def\DIfF^#1{%
	\mathop{\mathrm{\mathstrut d}}%
	\nolimits^{#1}\gobblespace}
\def\gobblespace{%
	\futurelet\diffarg\opspace}
\def\opspace{%
	\let\DiffSpace\!
	\ifx\diffarg(%
	\let\DiffSpace\relax
	\else
	\ifx\diffarg[%
	\let\DiffSpace\relax
	\else
	\ifx\diffarg\{%
	\let\DiffSpace\relax
	\fi\fi\fi\DiffSpace}
\newtheorem{theorem}{Theorem}
\newtheorem*{theorem*}{Theorem}
\newtheorem*{corollary*}{Corollary}
\newtheorem{lemma}{Lemma}
\newtheorem*{proposition*}{Proposition}
\newtheorem{proposition}{Proposition}
\theoremstyle{definition}
\theoremstyle{remark}
\newtheorem*{remark}{Remark}
\newcommand{\ord}{k}
\title{A sharp higher order Sobolev embedding}
\author{Raul Hindov, Shahaf Nitzan\footnote{The second author is supported by NSF CAREER grant DMS 1847796.}, Jan-Fredrik Olsen, Eskil Rydhe\footnote{The fourth author is supported by VR grant 2022-04307.}}
\begin{document}

	\maketitle
	
	\begin{abstract}
		\textcolor{black}{We obtain sharp embeddings from the Sobolev space $W^{\ord,2}_0(-1,1)$ into the space $L^1(-1,1)$ and determine the extremal functions.  This improves on a previous estimate of the sharp constants of these embeddings due to Kalyabin.}
	\end{abstract}
	
	\section{Introduction}
	In this paper we give a proof of a Sobolev-type inequality with a sharp constant and an explicit extremal function. This is motivated by the more general problem of calculating sharp constants and identifying extremal functions for Sobolev embeddings $W^{\ord,p}_0(-1,1) \subset L^q(-1,1)$. That is,   inequalities of the form
	\begin{equation*}
	\left(\int_{-1}^1|f|^q \diff x\right)^{\frac{1}{q}}\leq c_{\ord,p,q} \left(\int_{-1}^1\left| f^{(\ord)}\right|^p\diff x\right)^{\frac{1}{p}}
	\end{equation*}
	for functions $f: \mathbb{R} \rightarrow \mathbb{R}$ with support in $[-1,1]$ and that satisfy $f^{(k)} \in L^p(\mathbb{R})$. In particular,  for integers $\ord \geq 1$, $f$ satisfies the boundary conditions $f^{(j)}(\pm 1)=0$  for all $0 \le j < \ord$.
	
	We shall consider the case $q=1$, $p=2$, and integers $k \geq 1$, for which the sharp constants and extremal functions do not seem to be known. See, e.g., the surveys by Mitrinovi{\'c} et al. \cite[Chapter II]{Mitrinovic},  Kuznetsov and Nazarov \cite{Kuznetsov}, and Nazarov and Shcheglova \cite{Nazarov_Shcheglova2021}.
	
	Our main result is as follows.
	\begin{theorem}\label{theorem}
		For all integers $k \geq 1$ and $f\in W^{\ord,2}_0(-1,1)$,  we have the sharp inequality
		\begin{equation}\label{sob_ineq}
			\int_{-1}^1|f(x)| \diff x\leq \frac{1}{(2\ord-1)!!\sqrt{\ord+\frac12}}\left(\int_{-1}^1|f^{(\ord)}(x)|^2 \diff x\right)^{\frac{1}{2}}.
		\end{equation}
		The extremal functions are given by the Landau kernels, $L_\ord(x)=(1-x^2)^\ord$.
	\end{theorem}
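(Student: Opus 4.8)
The plan is to dualize the $L^1$-norm and reduce everything to a positivity statement for a Green's function. Given $f\in W^{\ord,2}_0(-1,1)$, set $\sigma=\operatorname{sgn}f$ (with $\operatorname{sgn}0:=0$), so $\int_{-1}^1|f|=\int_{-1}^1\sigma f$. Since $f^{(j)}(\pm1)=0$ for $0\le j<\ord$, any $\Phi$ with $\Phi^{(\ord)}=\sigma$ satisfies, after $\ord$ integrations by parts (all boundary terms vanish), $\int_{-1}^1\sigma f=(-1)^\ord\int_{-1}^1\Phi\,f^{(\ord)}$. Taking for $\Phi$ the $\ord$-th antiderivative $\Phi^{\min}_\sigma$ of $\sigma$ of least $L^2$-norm — equivalently the unique one orthogonal to $\mathcal P_{\ord-1}$, the space of polynomials of degree $<\ord$ — Cauchy–Schwarz gives $\int_{-1}^1|f|\le\|\Phi^{\min}_\sigma\|_2\,\|f^{(\ord)}\|_2$. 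Hence \eqref{sob_ineq} follows once one proves
\[
\|\Phi^{\min}_\sigma\|_{L^2(-1,1)}\le\frac{1}{(2\ord-1)!!\sqrt{\ord+\frac12}}\qquad\text{for every measurable }\sigma\colon(-1,1)\to[-1,1].
\]

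The next step is to rewrite this $L^2$-norm through a Green's function. Let $G_\ord(s,t)$ be the Green's function of $(-1)^\ord\frac{d^{2\ord}}{dx^{2\ord}}$ on $(-1,1)$ with the clamped conditions $u^{(j)}(\pm1)=0$, $0\le j<\ord$, and let $R$ be the corresponding solution operator, so $(-1)^\ord(R\sigma)^{(2\ord)}=\sigma$ and $R\sigma\in W^{\ord,2}_0$. Then $(-1)^\ord(R\sigma)^{(\ord)}$ is an $\ord$-th antiderivative of $\sigma$ lying in $\mathcal P_{\ord-1}^\perp$ (the latter because $R\sigma\in W^{\ord,2}_0$), hence it equals $\Phi^{\min}_\sigma$; integrating by parts once more,
\[
\|\Phi^{\min}_\sigma\|_2^2=\|(R\sigma)^{(\ord)}\|_2^2=(-1)^\ord\langle(R\sigma)^{(2\ord)},R\sigma\rangle=\langle\sigma,R\sigma\rangle=\int_{-1}^1\!\!\int_{-1}^1 G_\ord(s,t)\,\sigma(s)\sigma(t)\,ds\,dt.
\]

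The crux is the positivity $G_\ord\ge0$ on $(-1,1)^2$ (in fact $G_\ord>0$ in the interior) — the classical fact that the clamped $2\ord$-th order operator on an interval has a positive Green's function. Granting this,
\[
\|\Phi^{\min}_\sigma\|_2^2=\iint G_\ord\,\sigma(s)\sigma(t)\le\iint G_\ord\,|\sigma(s)||\sigma(t)|\le\iint G_\ord=\langle\mathbbm{1},R\mathbbm{1}\rangle=\|\Phi^{\min}_{\mathbbm{1}}\|_2^2,
\]
and it remains to identify $\Phi^{\min}_{\mathbbm{1}}$, the least-norm $\ord$-th antiderivative of the constant $1$: it is $\tfrac1{\ord!}$ times the orthogonal projection of $x^\ord$ onto $\mathcal P_{\ord-1}^\perp$ in $L^2(-1,1)$, which is $\tfrac{1}{(2\ord-1)!!}P_\ord$ with $P_\ord$ the Legendre polynomial. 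Thus $\|\Phi^{\min}_{\mathbbm{1}}\|_2=\|P_\ord\|_2/(2\ord-1)!!=\big((2\ord-1)!!\sqrt{\ord+\tfrac12}\big)^{-1}$, the asserted constant, proving \eqref{sob_ineq}. Sharpness is verified on $f=L_\ord$: by Rodrigues' formula $L_\ord^{(\ord)}=(-1)^\ord 2^\ord\ord!\,P_\ord$, so $\|L_\ord^{(\ord)}\|_2=2^\ord\ord!\sqrt{2/(2\ord+1)}$, while $\|L_\ord\|_1=\int_{-1}^1(1-x^2)^\ord dx=2^{\ord+1}\ord!/(2\ord+1)!!$ by the Beta integral, and a direct check shows equality holds in \eqref{sob_ineq}. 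For the characterization of extremals: since $G_\ord>0$ in the interior, equality in the chain above forces $|\sigma|=1$ a.e. and $\sigma$ to be of constant sign, so $f$ does not change sign; equality in the Cauchy–Schwarz step then forces $f^{(\ord)}$ to be a multiple of $\Phi^{\min}_{\pm\mathbbm{1}}\propto P_\ord\propto L_\ord^{(\ord)}$, whence $f$ is a constant multiple of $L_\ord$.

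I expect the main obstacle to be precisely the positivity of $G_\ord$. For $\ord=1,2$ it is the elementary maximum principle for $-u''$ and for the clamped rod, but for general $\ord$ one needs the classical-but-nontrivial assertion that $(-1)^\ord u^{(2\ord)}=g\ge0$ together with $u\in W^{\ord,2}_0(-1,1)$ forces $u\ge0$. This can be obtained by a careful sign-count on the successive derivatives of $u$ — as already in the case $\ord=2$, where $u''$ is convex with two vanishing moments, hence changes sign exactly twice, which forces $u'$ to change sign exactly once and $u$ to be unimodal with $u(\pm1)=0$, so $u\ge0$ — or by invoking the total positivity of this Green's kernel; I would expect carrying out the general induction to be the most delicate part of the argument.
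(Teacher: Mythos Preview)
Your argument is correct and follows a genuinely different route from the paper. The paper constructs, for each choice of nodes $y_1<\cdots<y_\ord$, an explicit left inverse kernel $B_\ord(x,y)$ for $f\mapsto f^{(\ord)}$, proves by an elementary interpolation lemma (a mean-value-theorem induction on the function $y\mapsto y^{\ord-1}\mathbbm{1}_{y>0}$) that $x\mapsto B_\ord(x,y)$ has constant sign, and then applies Cauchy--Schwarz in the form $\|f\|_1\le\bigl\|\int_{-1}^1|B_\ord(x,\cdot)|\,dx\bigr\|_{L^2}\|f^{(\ord)}\|_{L^2}$; the sign property turns $\int|B_\ord|\,dx$ into a monic degree-$\ord$ polynomial, and minimizing over the nodes yields the monic Legendre polynomial. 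You instead dualize, write $\|f\|_1=(-1)^\ord\langle\Phi_\sigma^{\min},f^{(\ord)}\rangle$, and bound $\|\Phi_\sigma^{\min}\|_2^2=\langle\sigma,R\sigma\rangle$ via the pointwise positivity of the clamped Green's function $G_\ord$. Both approaches hinge on a sign result, but the paper proves its sign lemma from scratch in a page, whereas your key input is Boggio's theorem for the polyharmonic Green's function on an interval (the one-dimensional ball), a classical but decidedly nontrivial fact that you correctly flag as the crux and only sketch. The trade-off: your argument is shorter and more conceptual, and it gives the extremal characterization cleanly from strict positivity of $G_\ord$; the paper's argument is fully self-contained and avoids importing the Green's-function machinery, at the cost of the somewhat delicate Propositions establishing the structure of $B_\ord$.
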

	
	This theorem improves a bound due to Kalyabin \cite{Kalyabin2}. Indeed, for $p=2$, $q \in (0,\infty)$ and $k \in \mathbb{Z}_+$, Kalyabin obtained that
	\begin{equation*}
	\frac{\sqrt{\ord+\frac12}}{2^\ord \ord!}\left(\mathcal{L}(\ord q)\right)^{1/q}
	\leq
	c_{\ord,2,q}
	\leq
	\frac{\left(\mathcal{L}((\ord-\frac12)q)\right)^{1/q}}{2^\ord(\ord-1)!\sqrt{\ord-\frac12}},
	\end{equation*}
	where $\mathcal{L}(s)=\int_{-1}^{1}(1-x^2)^s \diff x$.  A straight-forward calculation reveals that the constant from Theorem \ref{theorem} is identical to Kalyabin's lower estimate in the case $q=1$, implying that it was indeed sharp.

	It is well-known that sharp constants of Sobolev embeddings can be connected to minimal eigenvalues of certain boundary value problems (see, e.g.,  \cite{Widom}). In our case, the sharp constant in Theorem \ref{theorem} is connected to the minimal eigenvalue of the boundary value problem
	\begin{equation*}
	\left\{
	\begin{aligned}
	(-1)^\ord u^{(2\ord)}(x) &=\lambda \frac{u(x)}{|u(x)|}\int_{-1}^{1}|u(t)|\diff t,  &&x\in [-1,1], \\[2mm]
	u^{(j)}(\pm 1)&=0,  &&j \in\{0,1,\ldots, 2\ord-1\}.
	\end{aligned}
	\right.
	\end{equation*}
	Indeed, we have
	\begin{align*}
	\min_{u\ne 0}\frac{\left((-1)^ku^{(2k)},u\right)}{\left(\frac{u}{|u|}\left\Vert u\right\Vert_{L^1(-1,1)},u\right)}&= \min_{u\ne 0}\frac{\left(u^{(k)},u^{(k)}\right)}{\left\Vert u\right\Vert^2_{L^1(-1,1)}}\\[2mm]
	&= \min_{u\ne 0}\frac{\left\Vert u^{(\ord)} \right\Vert_{L^2(-1,1)}^2}{\left\Vert u\right\Vert^2_{L^1(-1,1)}}=\left(\frac{1}{c_{\ord,2,1}}\right)^2,
	\end{align*}
	where $(\cdot,\cdot)$ is the inner product in $L^2(-1,1)$ and the first equality follows upon $k$ times partially integrating and using the boundary conditions. In  \cite{Widom}, a similar example is provided for the case $p=q=2$.
	
\section{Proof of Theorem \ref{theorem}}
	The main idea of the proof is to consider a class of explicit left inverses of the differential operator $f\mapsto f^{(\ord)}$ of the form
	\begin{align*}
		f(x)=\int_{-1}^1 B_\ord(x,y)f^{(\ord)}(y)\diff y,\quad x\in\mathbb{R}.
	\end{align*}
	By the Cauchy-Schwarz inequality, we obtain from this integral representation that
	\begin{align*}
		\int_{-1}^1 |f(x)|\diff x \leq \left\Vert f^{(\ord)}\right\Vert_{L^2(-1,1)}
		\left\Vert \int_{1}^1|B_\ord(x,y)|\diff x \right\Vert_{L^2_{(-1,1;\diff y)}}.
	\end{align*}
	The inequality of Theorem \ref{theorem} is then obtained by connecting the expression involving the norm of $B_\ord(x,y)$ to a minimization problem having Legendre polynomials as minimizers. Finally, sharpness is obtained by noting that Landau kernels are extremal functions for the resulting inequality.
	
	Before proceeding with a proof, we discuss some notation. For  integers $\ord \geq 1$ and $p\in [1,\infty)$
	we define the Sobolev space
	\begin{equation*}
	W^{\ord,p}_0(-1,1)=\left\{ f:\mathbb{R}\to \mathbb{R} \mid \operatorname{supp} f \subseteq[-1,1],\ f^{(\ord)}\in L^p(\mathbb{R}) \right\}.
	\end{equation*}
	For each integer $0\le j < \ord$, the derivative $f^{(j)}$ is absolutely continuous since $f^{(j-1)}(x)=\int_{-1}^{x}f^{(j)}(t)\diff t$ and $L^p(-1,1)\subset L^1(-1,1)$. Hence, as mentioned in the introduction, since $f$ has support in $[-1,1]$, it follows that $f^{(j)}(\pm 1)=0$ for each such $j$.
	The norm of $f$ in $W^{\ord,p}_0(-1,1)$ is defined by $\Vert f \Vert_{W^{\ord,p}_0(-1,1)} = \Vert f^{(k)} \Vert_{L^p(-1,1)}$.  
		By $\mathbbm{1}_X$, we denote the function that equals 1 if condition $X$ is satisfied, and 0 otherwise. By, $\delta_{nm}$ we denote the standard Kronecker delta function.
		
	Now we give the details of the proof.
	
	\subsection{Construction of the explicit left inverses}
	
	For integers $\ord \geq 1$, define the functions $b_\ord\colon \mathbb{R}^2\to \mathbb{R}$ by
	\begin{align*}
		b_\ord(x,y)=\frac{(x-y)^{\ord-1}}{(\ord-1)!}\left[\mathbbm{1}_{y<x<0}-\mathbbm{1}_{y>x\geq 0}\right].
	\end{align*}
	The support of $b_\ord$ is indicated in Figure~\ref{fig:SupportOfBp1}.
	\begin{figure}
		\centering
		\begin{tikzpicture}[scale=1]
			\draw[thin,->] (0,2.5) -- (8,2.5) node[right] {$x$};
			\draw[thin,->] (4,0) -- (4,5) node[right] {$y$};
			
			\draw[dashed, very thin] (0,0) -- (8,5);
			\shade [left color=orange, middle color=white, shading angle=30] (4,2.55)--(4,4.7)--(7.5,4.7);
			\shade [right color=teal, middle color=white, shading angle=30] (3.95,2.45)--(3.95,0.3)--(.5,0.3);
			\draw (1.6,3) node {$0$};
			\draw (2.3,1.6) node[anchor = north west] {$\frac{(x-y)^{\ord-1}}{(\ord-1)!}$};
			\draw (6.4,2) node {$0$};
			\draw (6,3.4) node[anchor = south east] {$-\frac{(x-y)^{\ord-1}}{(\ord-1)!}$};

		\end{tikzpicture}
		\caption{Illustration of the support and values of $b_\ord(x,y)$. The thin dashed line indicates $y=x$. $b_\ord(x,y)$ is equal to zero outside of the shaded area between the $y$-axis and the line $y=x$.}\label{fig:SupportOfBp1}
	\end{figure}
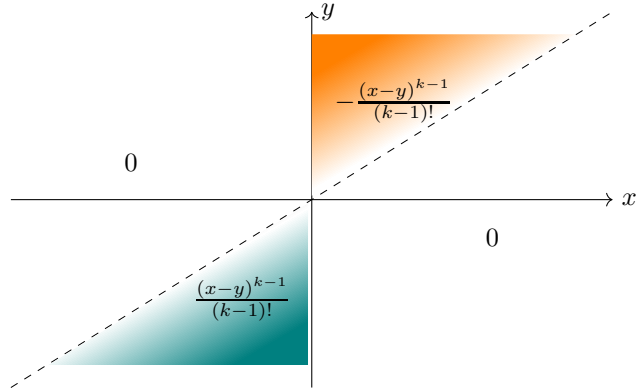
	
	\begin{proposition}\label{proposition:PropertiesOfBp1}\phantom{.}
		For all  integers $\ord \geq 1$, the following holds.
		\begin{enumerate}[$(i)$]
			\item\label{vertical} $\displaystyle \int_{-\infty}^\infty b_\ord(x,y)\diff x = \frac{(-y)^\ord}{\ord!}$ for all $y\in\mathbb{R}$.
			\item\label{limits} The following one-sided limits hold.
			\begin{align*}
			\lim_{x\to 0^-} b_\ord(x,y)&=\left\{ \begin{aligned} 0 & \quad \text{for} \quad y> 0 \\[2mm]
				\frac{(-y)^{\ord-1}}{(\ord-1)!} & \quad \text{for} \quad y\leq0 \end{aligned} \right. \\[2mm]
				b_k(0,y) = \lim_{x\to 0^+} b_\ord(x,y)&=\left\{ \begin{aligned} \frac{-(-y)^{\ord-1}}{(\ord-1)!} & \quad \text{for} \quad y> 0 \\%[2mm]
					0 & \quad \text{for} \quad y\leq0 \end{aligned} \right. .
				\end{align*}
			\item\label{repr} Let $f\in W^{\ord,2}_0(-1,1)$ and $Q$ be a polynomial with $\deg Q<\ord$.Then, for all $x\in \mathbb{R}$,
			\begin{equation*}
				\int_{-\infty}^\infty \left[b_\ord(x,y)-Q(y)\right]f^{(\ord)}(y)\diff y = f(x).
			\end{equation*}
		\end{enumerate}
	\end{proposition}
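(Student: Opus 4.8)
The plan is to dispatch parts $(\ref{vertical})$ and $(\ref{limits})$ by direct computation, and to reduce part $(\ref{repr})$ to the classical Taylor formula with integral remainder.

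For $(\ref{vertical})$, fix $y\in\mathbb{R}$. If $y<0$, the indicator $\mathbbm{1}_{y>x\ge0}$ can never hold (it would require $0\le x<y<0$), so $b_\ord(x,y)=\frac{(x-y)^{\ord-1}}{(\ord-1)!}\mathbbm{1}_{y<x<0}$ and
$$\int_{-\infty}^{\infty}b_\ord(x,y)\diff x=\int_{y}^{0}\frac{(x-y)^{\ord-1}}{(\ord-1)!}\diff x=\frac{(-y)^{\ord}}{\ord!}.$$
If $y>0$, the first indicator is vacuous and symmetrically $\int_{-\infty}^{\infty}b_\ord(x,y)\diff x=-\int_{0}^{y}\frac{(x-y)^{\ord-1}}{(\ord-1)!}\diff x=\frac{(-y)^{\ord}}{\ord!}$; for $y=0$ both indicators vanish and both sides are $0$. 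Part $(\ref{limits})$ is then immediate from the definition of $b_\ord$, once one notes which of the two indicators is active as $x\to0^-$, respectively as $x\to0^+$.

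The content is in part $(\ref{repr})$. Here I would first record two auxiliary facts. First, for every real polynomial $P$ with $\deg P<\ord$ one has $\int_{-\infty}^{\infty}P(y)f^{(\ord)}(y)\diff y=0$: since $f^{(\ord)}$ is supported in $[-1,1]$, integrate by parts $\ord$ times over $[-1,1]$, which is legitimate because each $f^{(j)}$ with $0\le j<\ord$ is absolutely continuous and $f^{(\ord)}\in L^1(-1,1)$; every boundary term carries a factor $f^{(j)}(\pm1)=0$ with $0\le j<\ord$, and the final integrand is $(-1)^{\ord}P^{(\ord)}(y)f(y)\equiv0$. Second, for every $x\in\mathbb{R}$,
$$f(x)=\int_{-\infty}^{x}\frac{(x-y)^{\ord-1}}{(\ord-1)!}f^{(\ord)}(y)\diff y;$$
for $x\le-1$ both sides vanish by the support condition, while for $x>-1$ the lower limit may be replaced by $-1$ and the identity is Taylor's formula with integral remainder expanded at the endpoint $-1$, proved by the same kind of repeated integration by parts and using $f^{(j)}(-1)=0$ for $0\le j<\ord$.

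It remains to combine these. A short case check against the definition shows that, for each fixed $x$ and almost every $y$,
$$b_\ord(x,y)=\frac{(x-y)^{\ord-1}}{(\ord-1)!}\mathbbm{1}_{y<x}-\mathbbm{1}_{x\ge0}\,\frac{(x-y)^{\ord-1}}{(\ord-1)!}.$$
For fixed $x$ the subtracted term is a polynomial in $y$ of degree $\ord-1<\ord$, so $y\mapsto b_\ord(x,y)-Q(y)$ is the Taylor kernel $\frac{(x-y)^{\ord-1}}{(\ord-1)!}\mathbbm{1}_{y<x}$ minus a polynomial in $y$ of degree $<\ord$; pairing with $f^{(\ord)}$ and invoking the two auxiliary facts yields $f(x)-0=f(x)$, as claimed. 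I do not expect a genuine obstacle here: the only points needing care are the bookkeeping of the vanishing boundary terms in the two integration-by-parts arguments at the stated Sobolev regularity, and the verification of the displayed identity for $b_\ord$ — in particular that the discrepancy on the null set $\{y=x\}$, which occurs only when $\ord=1$, is irrelevant under the integral sign.
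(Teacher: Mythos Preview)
Your proof is correct and follows essentially the same approach as the paper: direct computation for $(\ref{vertical})$ and $(\ref{limits})$, and for $(\ref{repr})$ the Taylor formula with integral remainder together with the orthogonality $\int Q(y)f^{(\ord)}(y)\diff y=0$ for $\deg Q<\ord$. The only cosmetic difference is that the paper treats the cases $x<0$ and $x\ge 0$ by two separate integration-by-parts computations, whereas you absorb that case split into the single algebraic identity $b_\ord(x,y)=\tfrac{(x-y)^{\ord-1}}{(\ord-1)!}\mathbbm{1}_{y<x}-\mathbbm{1}_{x\ge 0}\tfrac{(x-y)^{\ord-1}}{(\ord-1)!}$ and then invoke the Taylor kernel once.
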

	\begin{remark}
		By part (\ref{repr}) of the proposition, the integral operator induced by $b_\ord$ is a left-inverse to the differential operator $f\mapsto f^{(\ord)}$. Moreover, such a left-inverse is not unique; the polynomial $Q$, which may depend on $x$, provides a parametrization of a family of left-inverses. By part (\ref{limits}), $b_\ord(x,y)$ is discontinuous across the $y$-axis whenever $y\neq 0$. 
	\end{remark}
	\begin{proof}
		Property (\ref{vertical}) follows by direct computation, and property (\ref{limits}) follows immediately from the definition of $b_\ord(x,y)$.
		
		To prove the reproducing property (\ref{repr}), we use integration by parts repeatedly.
		Indeed,	for $x<0$, we have
		\begin{equation*}
			\int_{\mathbb{R}}b_\ord(x,y)f^{(\ord)}(y)\diff y 
				=\sum_{j=1}^{\ord -1}\left[\frac{(x-y)^{\ord-j}}{(\ord -j)!}f^{(\ord-j)}(y)\right]_{-1}^x+
				\int_{-1}^{x}f'(y)\diff y=f(x),
		\end{equation*}
		where all the terms in the sum vanish due to the boundary conditions on $f$.
		The case $x\geq 0$ is treated similarly.
		Finally, since $Q$ is a polynomial of $\deg Q<\ord$, it follows that
		\begin{equation*}
			\int_{\mathbb{R}}Q(y)f^{(\ord)}(y)\diff y = 0.
		\end{equation*}
	\end{proof}
	
	\begin{lemma}\label{lemma:PropertiesOfB}\phantom{.}
		Given any integer $\ord \ge 1$, let
		\begin{align*}
			f_\ord(y)=y^{\ord-1}\mathbbm{1}_{y>0},\quad y\in\mathbb{R}.
		\end{align*}
		Moreover, let $Q$ be a polynomial of degree at most $\ord - 1$, such that $f_\ord(y_n)=Q(y_n)$ for $\ord+1$ distinct real numbers $y_1 < \ldots < y_{\ord+1}$. \textcolor{black}{Then either  $Q(y) \equiv y^{k-1}$ or $Q(y) \equiv 0$. In particular,
		this implies that either $y_1 \geq 0$ or $y_{k+1} \leq 0$.}
	\end{lemma}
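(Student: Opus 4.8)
The plan is to prove this by induction on $\ord$, using Rolle's theorem to descend from level $\ord$ to level $\ord-1$. I would dispatch the base cases $\ord=1,2$ by hand: there $Q$ has degree at most $1$, and comparing $Q$ with $f_\ord$ (which vanishes on $(-\infty,0]$ and equals $y^{\ord-1}$ on $(0,\infty)$) at the given $\ord+1$ points, using that a polynomial of degree at most $1$ vanishing at two distinct points is identically zero, shows directly that in every admissible configuration $Q$ must be $0$ or $y^{\ord-1}$.

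For the inductive step I would fix $\ord\ge 3$, assume the statement at level $\ord-1$, and put $g:=Q-f_\ord$. The crucial point is that for $\ord\ge 3$ the function $f_\ord$ is differentiable on all of $\mathbb{R}$, with $f_\ord'(y)=(\ord-1)y^{\ord-2}\mathbbm{1}_{y>0}=(\ord-1)f_{\ord-1}(y)$, the identity being valid at $y=0$ too since both sides vanish there. Thus $g$ is differentiable and vanishes at $y_1<\cdots<y_{\ord+1}$, so Rolle's theorem yields $\ord$ distinct points $z_1<\cdots<z_\ord$, one inside each interval $(y_n,y_{n+1})$, with $g'(z_j)=0$. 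Writing $\widetilde Q:=Q'/(\ord-1)$, a polynomial of degree at most $\ord-2=(\ord-1)-1$, we have $g'=(\ord-1)(\widetilde Q-f_{\ord-1})$, so $\widetilde Q$ agrees with $f_{\ord-1}$ at the $(\ord-1)+1$ points $z_1,\dots,z_\ord$. The inductive hypothesis then gives $\widetilde Q\equiv y^{\ord-2}$ or $\widetilde Q\equiv 0$, and integrating yields $Q\equiv y^{\ord-1}+a$ or $Q\equiv a$ for some constant $a$.

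It then remains to eliminate $a\neq 0$ and to read off the last sentence of the statement. If $Q\equiv a$ with $a\neq 0$, then $f_\ord(y)=a$ would hold at $\ord+1\ge 3$ points, which is impossible since that equation forces $y>0$ and $y^{\ord-1}=a$ while $y\mapsto y^{\ord-1}$ is strictly increasing on $(0,\infty)$; hence $a=0$ and $Q\equiv 0$. If $Q\equiv y^{\ord-1}+a$ with $a\neq 0$, then $f_\ord(y)-y^{\ord-1}=a$ would hold at $\ord+1\ge 3$ points, again impossible because $f_\ord(y)-y^{\ord-1}$ equals $0$ on $(0,\infty)$ and $-y^{\ord-1}$ on $(-\infty,0]$, where $y\mapsto y^{\ord-1}$ is strictly monotone; hence $a=0$ and $Q\equiv y^{\ord-1}$. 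For the final clause: if $Q\equiv 0$ then $f_\ord(y_n)=0$ for all $n$, which forces every $y_n\le 0$, so $y_{\ord+1}\le 0$; if $Q\equiv y^{\ord-1}$ then $f_\ord(y_n)=y_n^{\ord-1}$ for all $n$, and since $f_\ord$ vanishes on $(-\infty,0)$ while $y^{\ord-1}$ does not, every $y_n\ge 0$, so $y_1\ge 0$.

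The step I expect to require the most care is the legitimacy of the descent at the low end: verifying that $g$ really is differentiable everywhere (this is precisely where $\ord\ge 3$ is needed, and why $\ord=1,2$ must be argued separately), that $f_\ord'=(\ord-1)f_{\ord-1}$ is an honest pointwise identity including at the origin, and that $\widetilde Q$ has degree at most $(\ord-1)-1$ so that the inductive hypothesis applies. Everything past that is bookkeeping.
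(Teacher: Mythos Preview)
Your proof is correct and follows essentially the same approach as the paper: handle $\ord=1,2$ by hand, then for $\ord\ge 3$ use Rolle's theorem together with the pointwise identity $f_\ord'=(\ord-1)f_{\ord-1}$ to descend to level $\ord-1$. The only minor difference is in how the induction closes: after obtaining the Rolle points $z_1<\cdots<z_\ord$, the paper invokes the \emph{sign} part of the inductive conclusion (all $z_j\ge 0$ or all $z_j\le 0$) and uses interlacing to force $\ord$ of the original $y_n$ to share a sign, whence $Q$ agrees with $0$ or $y^{\ord-1}$ at $\ord$ points; you instead invoke the \emph{identity} part ($\widetilde Q\equiv y^{\ord-2}$ or $0$), integrate, and kill the constant---both routes are equally clean.
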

	\begin{proof}	
	If $\ord =1$, then $Q$ is a constant function. From this, it follows immediately that if $Q(y) -  \mathbbm{1}_{y>0}$ vanishes at two distinct points, then  either \textcolor{black}{$Q(y) \equiv 1$ or $Q(y) \equiv 0$}. Moreover, these points have to be both \textcolor{black}{non-positive} or both  \textcolor{black}{non-negative}. This argument is easily extended to $\ord = 2$ using the linearity of $Q$ and the hypothesis that $Q(y) - y\mathbbm{1}_{y>0}$ is to vanish at three distinct points.

	For general $k > 2$, we proceed by induction. To this end, suppose that $Q$ is a polynomial of degree at most $k-1$ and that 
		$g(y) = Q(y) - y^{k-1}\mathbbm{1}_{y>0}$ vanishes at $k+1$ distinct points. As $g$ has a continuous
		derivative, the mean value theorem implies that
		$g'(y) = Q(y) - (k-1)y^{k-2}\mathbbm{1}_{y>0}$ vanishes at at least $k$ distinct points. It follows by the induction hypothesis, that these $k$ points are either all   \textcolor{black}{non-positive} or all  \textcolor{black}{non-negative}. In particular, this means that $Q(y) - y^{k-1}\mathbbm{1}_{y>0}$ vanishes at $k$ points that are either all  \textcolor{black}{non-positive} or all  \textcolor{black}{non-negative}. Since $Q$ is of degree $k-1$, the conclusion follows.
	\end{proof}
	\begin{remark}
	For $k\geq 1$, let $f_k(y)$ be as above. Then the lemma implies that if a polynomial $Q$ of degree at most $k-1$ is equal to ${f}_k(x^\ast - y)$ at $k+1$ distinct points $y_1 < \cdots < y_{k+1}$, then either $y_1 \geq x^\ast$ or $y_{k+1} \leq x^\ast$.
	\end{remark}
	
	We now fix $\ord$ distinct real numbers $y_1<y_2<\cdots < y_\ord$, and consider the corresponding Lagrange interpolation basis $\{p_n\}_{n=1}^\ord$ given by 
	\begin{equation*}
	p_n(y)=\prod_{\substack{1\le j\le \ord; \\ j \ne n}}\frac{y-y_j}{y_n-y_j}.
	\end{equation*}
	These polynomials are of degree $\ord-1$ and satisfy $p_n(y_m)=\delta_{nm}$. 
	Using these polynomials, we define functions $B_\ord\colon\mathbb{R}^2\to\mathbb{R}$ by
	\begin{align*}
		B_\ord(x,y)
		=
		b_\ord(x,y) - \sum_{n=1}^\ord p_n(y) b_\ord(x,y_n).
	\end{align*}
	\begin{proposition}\label{proposition:PropertiesOfB2}\phantom{.}
		For all integers $\ord \geq 1$ the following holds.
		\begin{enumerate}[$(i)$]
			\item\label{horiz_zeros}  For all $n\in\{1,2,\ldots,\ord\}$ and  $x \in \mathbb{R}$, we have $B_\ord(x,y_n)=0$.
			\item\label{repr_prop}  For all $f\in W_0^{\ord,2}(-1,1)$ and  $x\in\mathbb{R}$, we have $\int_{-\infty}^\infty \hspace{-0.5mm} B_\ord(x,y) \hspace{-0.5mm} f^{(\ord)}(y)\diff y \hspace{-0.5mm} = \hspace{-0.5mm} f(x)$.
			\item\label{cont_prop}  For all $y\in\mathbb{R}$, the function $x\mapsto B_\ord(x,y)$ is continuous. 
			\item\label{out_of_y1_yk} For all $y \in \mathbb{R}$, 
			\begin{equation*}
				B_k(x,y) = 
				\left\{ 
				\begin{aligned}
					\frac{(x-y)^{\ord-1}}{(\ord-1)!}\mathbbm{1}_{y<x}, && x \leq y_1 \\[2mm]
					-\frac{(x-y)^{\ord-1}}{(\ord-1)!}\mathbbm{1}_{y>x}, && x \geq y_k
				\end{aligned}				
				\right..
			\end{equation*}
			\item\label{constant_sign}  For all $y\in\mathbb{R}$, either $B_\ord(x,y)\ge 0$ or $B_\ord(x,y)\le 0$ for all $x\in\mathbb{R}$.
		\end{enumerate}
	\end{proposition}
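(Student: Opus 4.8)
The plan is to establish the five items in order, the sign property (v) being the main point. Items (i) and (ii) are immediate: $p_n(y_m)=\delta_{nm}$ gives $B_\ord(x,y_m)=b_\ord(x,y_m)-b_\ord(x,y_m)=0$, which is (i), and for each fixed $x$ the function $y\mapsto\sum_{n=1}^{\ord}p_n(y)b_\ord(x,y_n)$ is a polynomial in $y$ of degree at most $\ord-1$ (a constant-coefficient combination of the $p_n$), so (ii) is exactly Proposition \ref{proposition:PropertiesOfBp1}(\ref{repr}) applied with that polynomial in the role of $Q$. For (iii) I treat $\ord\ge2$: each building block $x\mapsto b_\ord(x,t)$ is continuous off $x=0$ because the prefactor $(x-t)^{\ord-1}$ vanishes to order $\ord-1$ on the diagonal $x=t$, while by Proposition \ref{proposition:PropertiesOfBp1}(\ref{limits}) its jump across $x=0$ is $-(-t)^{\ord-1}/(\ord-1)!$, a polynomial of degree $\ord-1$ in $t$. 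Hence the jump of $B_\ord(\cdot,y)$ across $x=0$ equals $-(-y)^{\ord-1}/(\ord-1)!+\sum_n p_n(y)(-y_n)^{\ord-1}/(\ord-1)!$, which is zero since Lagrange interpolation at the $\ord$ nodes $y_1,\dots,y_\ord$ reproduces any polynomial of degree $<\ord$; so $B_\ord(\cdot,y)$ has no jump and is continuous.

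For (iv) I would compute directly from the definition of $b_\ord$. If $x\le y_1$ then every node obeys $y_n\ge x$: when $x<0$ each $b_\ord(x,y_n)$ vanishes, so $B_\ord(x,y)=b_\ord(x,y)=\tfrac{(x-y)^{\ord-1}}{(\ord-1)!}\mathbbm{1}_{y<x}$, while when $x\ge0$ all nodes are nonnegative, $b_\ord(x,y_n)=-\tfrac{(x-y_n)^{\ord-1}}{(\ord-1)!}$, and summing these against $p_n(y)$ reproduces $-\tfrac{(x-y)^{\ord-1}}{(\ord-1)!}$ by exactness of Lagrange interpolation on polynomials of degree $<\ord$, giving the same formula once combined with $b_\ord(x,y)$. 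The range $x\ge y_\ord$ is symmetric under $x,y,y_n\mapsto-x,-y,-y_n$, which interchanges the two halves of $b_\ord$.

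The heart of the matter is (v). Using that $b_\ord(x,t)=\tfrac{1}{(\ord-1)!}f_\ord(x-t)$ for $x<0$ and $b_\ord(x,t)=\tfrac{(-1)^{\ord}}{(\ord-1)!}f_\ord(t-x)$ for $x\ge0$, with $f_\ord$ the truncated power of Lemma \ref{lemma:PropertiesOfB}, the equation $B_\ord(x,y)=0$ becomes the assertion that the polynomial of degree $<\ord$ which interpolates $s\mapsto f_\ord(x-s)$ (respectively $s\mapsto f_\ord(s-x)$) at $y_1,\dots,y_\ord$ agrees with that truncated power also at the extra point $y$. Discarding the trivial case $y\in\{y_1,\dots,y_\ord\}$ (then $B_\ord(\cdot,y)\equiv0$) and writing $z_1<\dots<z_{\ord+1}$ for $\{y,y_1,\dots,y_\ord\}$ sorted, this is agreement of a polynomial of degree $<\ord$ with a shifted truncated power at $\ord+1$ distinct points, so the remark following Lemma \ref{lemma:PropertiesOfB}, applied after the reflection $s\mapsto-s$ when $x\ge0$, forces $x\le z_1$ or $x\ge z_{\ord+1}$. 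Therefore $B_\ord(\cdot,y)$ is nowhere zero on the open interval $(z_1,z_{\ord+1})$; by part (iv) it vanishes for $x<z_1$ and for $x>z_{\ord+1}$; and on $(z_1,z_{\ord+1})$ it is continuous, by (iii) when $\ord\ge2$ and, when $\ord=1$, because the only possible interior discontinuity is at $x=0$, where the jump of $B_1(\cdot,y)$ cancels since $0\notin\{y,y_1\}$. A continuous function that does not vanish on the connected set $(z_1,z_{\ord+1})$ has one sign there, and $B_\ord(\cdot,y)$ is $0$ off that interval, so it has one sign on all of $\mathbb{R}$.

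The step I expect to be the main obstacle is (v), specifically the bookkeeping: recasting $B_\ord(x,y)=0$ as a polynomial-interpolation identity, treating $x<0$ and $x\ge0$ separately (with the sign change and the reflection $s\mapsto-s$ so that the remark after Lemma \ref{lemma:PropertiesOfB} applies as stated), confirming the boundary value at $x=0$, and checking that the $\ord+1$ interpolation points are genuinely distinct. The remaining items are either immediate or routine manipulations of the explicit kernel $b_\ord$ together with exactness of Lagrange interpolation on polynomials of degree $<\ord$.
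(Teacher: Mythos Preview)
Your proof is correct and follows essentially the same route as the paper: the same Lagrange-interpolation identities for (i)--(iv) and the same appeal to Lemma~\ref{lemma:PropertiesOfB} (and its remark) for (v). Your organization of (v) through the $y$-dependent interval $(z_1,z_{\ord+1})$ differs only cosmetically from the paper's contradiction argument on $(y_1,y_\ord)$, and your separate treatment of $\ord=1$ is in fact more careful than the paper, which overlooks that (iii) as stated fails for $\ord=1$ (e.g.\ $B_1(\cdot,y)$ has jumps at $x=y$ and $x=y_1$), though this does not affect (v) or the main theorem.
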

	\begin{proof}
		Property (\ref{horiz_zeros}) is immediate from $p_n(y_m)=\delta_{nm}$. Note that for fixed $x$, $B_\ord(x,y)=b_\ord(x,y)-Q(y)$ for a polynomial $Q$ of degree at most $\ord -1$. Hence, (\ref{repr_prop}) follows from   Proposition~\ref{proposition:PropertiesOfBp1}~(\ref{repr}).
		
		To establish (\ref{cont_prop}), we note that by Proposition~\ref{proposition:PropertiesOfBp1}~(\ref{limits}),
		for any fixed $y\in\mathbb{R}$, $x\mapsto b_\ord(x,y)$ is continuous apart from a jump discontinuity at $x=0$. Consequently, $x\mapsto B_\ord(x,y)$ can only fail to be continuous at $x=0$. We therefore compare $\lim_{x\to 0^-}B_\ord(x,y)$ with $B_\ord(0,y) = \lim_{x\to 0^+}B_\ord(x,y)$. Assuming first that $y>0$, we obtain from Proposition~\ref{proposition:PropertiesOfBp1}~(\ref{limits}) that
		\begin{align*}
			\lim_{x\to 0^-}B_\ord(x,y)&= - \sum_{n=1}^\ord p_n(y) \frac{(-y_n)^{\ord-1}}{(\ord-1)!}\mathbbm{1}_{y_n<0},\\
			\lim_{x\to 0^+}B_\ord(x,y)&= -\frac{(-y)^{\ord-1}}{(\ord-1)!} + \sum_{n=1}^\ord p_n(y) \frac{(-y_n)^{\ord-1}}{(\ord-1)!}\mathbbm{1}_{y_n>0}.
		\end{align*}
		By similar calculations for $y\leq 0$, it therefore holds for all $y\in\mathbb{R}$ that
		\begin{align*}
			\lim_{x\to 0^-}B_\ord(x,y) - \lim_{x\to 0^+}B_\ord(x,y)= \frac{(-y)^{\ord-1}}{(\ord-1)!} - \sum_{n=1}^\ord p_n(y) \frac{(-y_n)^{\ord-1}}{(\ord-1)!}.
		\end{align*}
		As a function of $y$, the above right-hand side vanishes at each of the points $y_1, y_2, \ldots, y_k$, and since it is a polynomial of degree at most $\ord-1$ it must be identically equal to $0$.
		
		To establish (\ref{out_of_y1_yk}) for the case $x\le y_1$, we note that the desired conclusion is immediate from the definition of $B_k$ if $x<0$. For the remaining case we fix $x$ so that $0\le x\le y_1$ and consider the corresponding expression
		\begin{align*}
			B_\ord(x,y)=-\frac{(x-y)^{\ord-1}}{(\ord-1)!}\mathbbm{1}_{y>x}+\sum_{n=1}^\ord p_n(y) \frac{(x-y_n)^{\ord-1}}{(\ord-1)!}.
		\end{align*}
		As a function of $y$, the sum in the above right-hand side is a polynomial of degree at most $\ord-1$. Moreover, it  is equal to the polynomial $\frac{(x-y)^{\ord-1}}{(\ord-1)!}$ at each $y_n$, and so these polynomials must coincide. This establishes the case  $x\le y_1$. For the case $x\ge y_\ord$, the result follws by analogous arguments.

		\textcolor{black}{To establish (\ref{constant_sign}), we first note that by (i) and (iii), the function $x \mapsto B_k(x,y)$ is continuous  for all $y \in \mathbb{R}$ and identically equal to zero for  each $y \in \{y_1, \ldots, y_k\}$. Moreover, by (iv), $B_k(x,y)$ does not change sign  on the sets  $\{(x,y) : x \leq y_1\}$ and  $\{(x,y) : x \geq y_k\}$, respectively  (cf., Figure~\ref{fig:SupportOfB3}). Therefore,  if $x \rightarrow B_k(x,y)$ changes sign for any fixed $y^\ast \notin \{y_1, \ldots, y_k\}$,   there must exist a point $x^\ast \in (y_1,y_k)$ so that $B_k(x^\ast, y^\ast)=0$. In particular, this means that  $y \mapsto B_k(x^\ast,y)$ vanishes at the $\ord + 1$ distinct points $\{y_1, \ldots, y_\ord, y^\ast\}$.
		To  show that this leads to a contradiction, 
		we  consider the cases  $x^*\ge 0$ and $x^*<0$  separately.  In the first case, it holds that
		\begin{align*}
			y \longmapsto B_\ord(x^*,y) = -\frac{(x^*-y)^{\ord-1}}{(\ord - 1)!}\mathbbm{1}_{y>x^*}  - Q(y),
		\end{align*}
		where $Q$ is some polynomial of degree at most $\ord - 1$.  Hence, by   Lemma~\ref{lemma:PropertiesOfB} and the remark following it, this implies that the  points $\{y_1, \ldots, y_\ord, y^\ast\}$ are either all smaller than $x^\ast$ or all greater than $x^*$. In particular, either $y_k \leq x^\ast$ or $y_1 \geq x^\ast$. This   contradicts the assumption that $x^*\in (y_1,y_\ord)$. The case $x^*<0$ may be treated similarly.}
	\end{proof}
	
	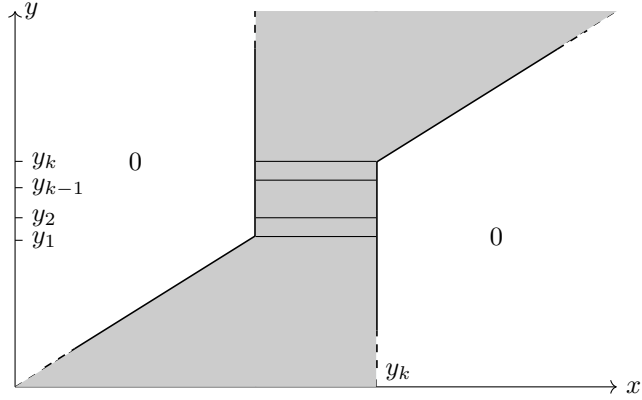
\begin{figure}
		\centering
		\begin{tikzpicture}[scale=1]
			\fill [teal!10] (0,0) -- (3.2,2) -- (3.2,5) -- (8,5) -- (4.8,3) -- (4.8,0);
			\draw[thin,->] (0,0) -- (8,0) node[right] {$x$};
			\draw[thin,->] (0,0) -- (0,5) node[right] {$y$};
			
			\draw[dashed, very thin] (0,0) -- (8,5);
			
			\draw[dashed, very thick] (0.4,0.25) -- (0.8,0.5);
			\draw[very thick] (0.8,0.5) -- (3.2,2);
			\draw[very thick] (4.8,3) -- (7.2,4.5);
			\draw[dashed, very thick] (7.2,4.5) -- (7.6,4.75);
			
			\draw[dashed, very thick] (3.2,0.25) -- (3.2,0.75);
			\draw[very thick] (3.2,0.75) -- (3.2,4.5);
			\draw[dashed, very thick] (3.2,4.5) -- (3.2,5);
			
			\draw[dashed, very thick] (4.8,0.25) -- (4.8,0.75);
			\draw[very thick] (4.8,0.75) -- (4.8,4.5);
			\draw[dashed, very thick] (4.8,4.5) -- (4.8,5);
			
			\draw (1.6,3) node {$0$};
			\draw (1.4,0.9) node[anchor = north west] {$\frac{(x-y)^{\ord-1}}{(\ord-1)!}$};
			\draw (6.4,2) node {$0$};
			\draw (6.8,4.1) node[anchor = south east] {$-\frac{(x-y)^{\ord-1}}{(\ord-1)!}$};
			
			\draw[thin] (0,1.95) -- (0.1,1.95) node[right] {$y_1$};
			\draw[thin] (0,2.25) -- (0.1,2.25) node[right] {$y_2$};
			\draw[thin] (0,2.65) -- (0.1,2.65) node[right] {$y_{\ord-1}$}; 
			\draw[thin] (0,3) -- (0.1,3) node[right] {$y_\ord$}; 
			\draw[thin] (3.2,0) -- (3.2,0.1);
			\draw[thin] (4.8,0) -- (4.8,0.1); 
			\draw[thin] (3.2,0.2) node[right] {$y_1$};
			\draw[thin] (4.8,0.2) node[right] {$y_\ord$}; 
			\fill [transparent,gray!40] (0,0) -- (3.2,2) -- (3.2,5) -- (8,5) -- (4.8,3) -- (4.8,0);
			\draw[thin] (3.2,3) -- (4.8,3);
%			\draw[thin] (3.2,2.8) -- (4.8,2.8);
			\draw[thin] (3.2,2.75) -- (4.8,2.75);
			\draw[thin] (3.2,2.25) -- (4.8,2.25);
%			\draw[thin] (3.2,2.2) -- (4.8,2.2);
			\draw[thin] (3.2,2) -- (4.8,2);
		\end{tikzpicture}
		\caption{Indication of the values of $B_{\ord}$ in various regions separated by thick lines, when $y_1>0$. The thin dashed line indicates $y=x$. The horisontal lines at $y \in \{y_1, \ldots, y_k\}$ indicate the zero set of $B_k$ for $x$ in the convex hull of $\{y_1,y_k,y\}$.}\label{fig:SupportOfB3}
	\end{figure}
	
	\subsection{Connection to a minimisation problem that leads to the Sobolev-type inequality (\ref{sob_ineq})}
	
	Suppose that $f\in W_0^{\ord,2}(-1,1)$  and $B_\ord(x,y)$ as above. Then we have
	\begin{align*}
		f(x)=\int_{-1}^1 B_\ord(x,y)f^{(\ord)}(y)\diff y,\quad x\in\mathbb{R}.
	\end{align*}
	Applying the  Cauchy--Schwarz inequality, we obtain that
	\begin{align*}
		\|f\|_{L^1(-1,1)}
		&\le 
		\left\Vert f^{(\ord)}\right\Vert_{L^2(-1,1)} \left\Vert\int_{-1}^1 |B_\ord(x,y)|\diff x\right\Vert_{L^2(-1,1;\diff y)}
		\\
		&=
		\left\Vert f^{(\ord)}\right\Vert_{L^2(-1,1)}\left\Vert\int_{-1}^1 B_\ord(x,y)\diff x\right\Vert_{L^2(-1,1;\diff y)},
	\end{align*}
	where the final equality is immediate from Proposition~\ref{proposition:PropertiesOfB2}(\ref{constant_sign}).
	
	It follows from Proposition~\ref{proposition:PropertiesOfBp1}(\ref{vertical}) that
	\begin{align*}
		\ord!\int_{-1}^1 B_\ord(x,y)\diff x
		=
		(-y)^\ord - \sum_{n=1}^\ord (-y_n)^{\ord}p_n(y).
	\end{align*}
	Since the polynomials $p_n$ are of degree $\ord-1$ and satisfy $p_n(y_m)=\delta_{nm}$, for $n,m\in \{1,2,\ldots,\ord\}$, the   right-hand side of the above expression is a monic polynomial of degree $\ord$  with distinct zeros $y_1, y_2,\ldots, y_\ord$.
	Since we can choose the zeros $y_1,y_2,\ldots,y_\ord$ freely,   any monic polynomial with distinct zeros can be obtained in this way. It follows that
	\begin{align*}
		\|f\|_{L^1(-1,1)}
		\le 
		\frac{\|f^{(\ord)}\|_{L^2(-1,1)}}{\ord!}\min_{\substack{p \text{ monic}\\ \deg p=\ord}}\|p\|_{L^2(-1,1)}.
	\end{align*}
	It is well-known that the unique minimizers are given by the monic Legendre polynomials
	\begin{equation*}
	P_\ord(y)=\frac{\ord!}{(2\ord)!}\frac{\diff^\ord}{\diff y^\ord}\left[(y^2-1)^\ord\right].
	\end{equation*}
	Since we could not find a convenient reference explaining this fact, we point out that
	the minimising property in the $L^2(-1,1)$ norm follows from the construction of the monic Legendre polynomials as the orthogonalization of powers $1,x,x^2,\ldots$ on the interval $-1\leq x \leq 1$  with respect to the Lebesgue measure. From this it follows that any monic polynomial $p$ of order $\ord$ can be written in the form $p(x)=P_\ord+c_{\ord-1}P_{\ord-1}(x)+\ldots+c_0$. Hence,
	\begin{equation*}
	\left\Vert p\right\Vert^2_{L^2(-1,1)}=\left\Vert P_\ord \right\Vert_{L^2(-1,1)}^2+\sum_{n=1}^{\ord-1}c_n^2 \left\Vert P_n \right\Vert_{L^2(-1,1)}^2.
	\end{equation*}
	Clearly, this expression is minimised by choosing $c_0=c_1=\ldots=c_{\ord-1}=0$.
	Since the  $L^2(-1,1)$ norm of the monic Legendre polynomial (see, e.g. \cite{lima2022lecturenoteslegendrepolynomials}) is
	\begin{align*}
	\left\Vert P_\ord \right\Vert_{L^2(-1,1)}^2
	&=\left(\frac{\ord!}{(2\ord-1)!!\sqrt{\ord+\frac12}}\right)^2.
	\end{align*}
	we conclude that,
	\begin{align*}
		\left\Vert f\right\Vert_{L^1(-1,1)} \leq \frac{1}{(2\ord-1)!!\sqrt{\ord+\frac12}}\left\Vert f^{(\ord)} \right\Vert_{L^2(-1,1)}.
	\end{align*}
	
	\subsection{Sharpness of the inequality in Theorem \ref{theorem}}
	
	While the proof of the sharpness of the inequality in Theorem \ref{theorem} is implicitly contained in \cite{Kalyabin2}, 	we provide a proof for the sake of completeness.
	\begin{lemma}
	For all integers $k \geq 0$, we have equality in (\ref{sob_ineq}) if and only if $f$ is equal to a constant multiple of the Landau kernel $L_k(x)$. In particular, the best constant of the equality is given by
		\begin{equation*}
			\frac{\left\Vert L_\ord\right\Vert_{L^1(-1,1)}}{\left\Vert L_\ord^{(\ord)} \right\Vert_{L^2(-1,1)}}= \frac{1}{(2\ord-1)!!\sqrt{\ord+\frac12}}.
		\end{equation*}
	\end{lemma}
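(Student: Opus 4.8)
The plan is to prove the two implications separately. For the ``if'' direction (and the numerical value of the constant) I would argue by direct computation. Since $L_\ord(x)=(1-x^2)^\ord$ is a polynomial vanishing to order $\ord$ at $x=\pm1$, it lies in $W^{\ord,2}_0(-1,1)$, and as (\ref{sob_ineq}) is homogeneous it suffices to show that $\|L_\ord\|_{L^1(-1,1)}/\|L_\ord^{(\ord)}\|_{L^2(-1,1)}$ equals the constant in (\ref{sob_ineq}). A Beta-integral evaluation gives $\|L_\ord\|_{L^1(-1,1)}=\int_{-1}^1(1-x^2)^\ord\diff x=2^{\ord+1}\ord!/(2\ord+1)!!$, and Rodrigues' formula gives $L_\ord^{(\ord)}=(-1)^\ord\tfrac{\diff^\ord}{\diff x^\ord}(x^2-1)^\ord=(-1)^\ord 2^\ord\ord!\,P^{\mathrm{Leg}}_\ord$, so $\|L_\ord^{(\ord)}\|_{L^2(-1,1)}^2=(2^\ord\ord!)^2\cdot\tfrac{2}{2\ord+1}$. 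A short simplification, using $(2\ord+1)!!=(2\ord-1)!!(2\ord+1)$ and $2\ord+1=2(\ord+\tfrac12)$, turns the ratio into $1/((2\ord-1)!!\sqrt{\ord+\tfrac12})$, as claimed; hence every constant multiple of $L_\ord$ is extremal.

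For the converse, let $f\neq0$ achieve equality (the case $f\equiv0$ is trivial). I would revisit the proof of (\ref{sob_ineq}) with $y_1<\dots<y_\ord$ taken to be the zeros of the monic Legendre polynomial $P_\ord$ (which are $\ord$ distinct points of $(-1,1)$), so that $\ord!\int_{-1}^1 B_\ord(x,y)\diff x=P_\ord(y)$. Equality in (\ref{sob_ineq}) forces equality in the two estimates used there: (a) the pointwise bound $|f(x)|=\bigl|\int_{-1}^1 B_\ord(x,y)f^{(\ord)}(y)\diff y\bigr|\le\int_{-1}^1|B_\ord(x,y)|\,|f^{(\ord)}(y)|\diff y=:F(x)$ holds with equality for a.e.\ $x$, and (b) the Cauchy--Schwarz step holds with equality. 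From (b), $|f^{(\ord)}(y)|=\tfrac{c}{\ord!}|P_\ord(y)|$ a.e.\ for some $c>0$ (nonzero since $f\neq0$). From (a), for a.e.\ $x$ the function $y\mapsto B_\ord(x,y)f^{(\ord)}(y)$ has constant sign on $(-1,1)$.

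It then remains to upgrade this to $f^{(\ord)}=\tfrac{c}{\ord!}P_\ord$. By Proposition~\ref{proposition:PropertiesOfB2}(\ref{repr_prop}) applied to $L_\ord$ one has $L_\ord(x)=\int_{-1}^1 B_\ord(x,y)L_\ord^{(\ord)}(y)\diff y$, so $B_\ord(x,\cdot)\not\equiv0$ for every $x\in(-1,1)$; since $|f^{(\ord)}|>0$ a.e., this yields $F(x)>0$ for every $x$. As $F$ is continuous (by Proposition~\ref{proposition:PropertiesOfB2}(\ref{cont_prop}), dominated convergence, and boundedness of $B_\ord$) and $|f|=F$ a.e.\ hence everywhere, $f$ has no zero in $(-1,1)$ and, being continuous, has constant sign; after replacing $f$ by $-f$ we may assume $f>0$. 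Then (a) says $B_\ord(x,y)f^{(\ord)}(y)\ge0$ for a.e.\ $(x,y)$, and by Proposition~\ref{proposition:PropertiesOfB2}(\ref{constant_sign}) the sign $s(y)$ of $B_\ord(x,y)$ is independent of $x$; hence $s(y)=\operatorname{sgn}f^{(\ord)}(y)$ for a.e.\ $y$. On the other hand $s(y)=\operatorname{sgn}\int_{-1}^1 B_\ord(x,y)\diff x=\operatorname{sgn}P_\ord(y)$ for a.e.\ $y$. Therefore $\operatorname{sgn}f^{(\ord)}=\operatorname{sgn}P_\ord$ a.e., which combined with $|f^{(\ord)}|=\tfrac{c}{\ord!}|P_\ord|$ gives $f^{(\ord)}=\tfrac{c}{\ord!}P_\ord$. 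Since $L_\ord^{(\ord)}$ is a nonzero scalar multiple of $P_\ord$ and the map $g\mapsto g^{(\ord)}$ is injective on $W^{\ord,2}_0(-1,1)$ (its kernel, the polynomials of degree $<\ord$ supported in $[-1,1]$, is $\{0\}$), $f$ is a scalar multiple of $L_\ord$; undoing the sign normalization only changes the scalar. Together with the first part this also gives the stated best constant.

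I expect the converse to be the main obstacle, and within it the point that condition (a) by itself pins down $\operatorname{sgn}f^{(\ord)}$ only on the outermost intervals $(-1,y_1)$ and $(y_\ord,1)$; global control arrives only after one rules out zeros of $f$ in $(-1,1)$, for which the crucial inputs are that $B_\ord(x,\cdot)\not\equiv0$ for every $x$ and that $F$ is continuous. For $\ord=0$ the inequality (\ref{sob_ineq}) is just the Cauchy--Schwarz inequality and should be handled on its own.
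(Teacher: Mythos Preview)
Your ``if'' direction and the computation of the constant are correct and match the paper's approach: the paper evaluates $\|L_\ord\|_{L^1}$ and $\|L_\ord^{(\ord)}\|_{L^2}^2$ by repeated integration by parts, while you use a Beta integral and Rodrigues' formula, but both are the same direct verification that $L_\ord$ attains the constant.

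The real difference is that you prove more than the paper does. The paper's proof of this lemma consists solely of that computation; it establishes sharpness (the ``if'' direction) but does not actually prove the ``only if'' part of the statement. Your uniqueness argument---fixing the nodes $y_1,\dots,y_\ord$ at the Legendre zeros so that the minimisation step becomes an equality, then tracking equality through the pointwise bound (a) and the Cauchy--Schwarz step (b), using $L_\ord(x)>0$ together with Proposition~\ref{proposition:PropertiesOfB2}(\ref{cont_prop}) to get $F>0$ continuous on $(-1,1)$ and hence a constant sign for $f$, and finally invoking Proposition~\ref{proposition:PropertiesOfB2}(\ref{constant_sign}) to pin down $\operatorname{sgn} f^{(\ord)}=\operatorname{sgn} P_\ord$---is sound and fills the gap. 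What your route buys is a self-contained characterisation of all extremals; what the paper's route buys is brevity, at the cost of leaving the uniqueness claim unproved (implicitly deferring it to \cite{Kalyabin2}). Your closing remark that the case $\ord=0$ must be treated separately is also well taken, since the kernel machinery is set up only for $\ord\ge 1$.
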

	\begin{proof}
		Recall that $L_\ord(x)=(1-x^2)^\ord$. Writing $L_k(x)=(1-x)^k(1+x)^k$, we obtain 
		by repeated integration by parts that
		\begin{equation*}
			\int_{-1}^{1} |L_\ord(x)| \diff x=\frac{(\ord!)^2}{(2\ord)!}\frac{2^{2\ord+1}}{2\ord+1}
		\end{equation*}
		and, moreover, that
		\begin{equation*}
			\begin{split}
				\int_{-1}^{1}\left(L_\ord^{(\ord)}(x)\right)^2 \diff x&= (\ord!)^2\frac{2^{2\ord+1}}{2\ord+1}.
			\end{split}
		\end{equation*}
		We conclude that
		\begin{equation*}
		\frac{\left\Vert L_\ord \right\Vert_{L^1(-1,1)}}{\left\Vert L_\ord^{(\ord)} \right\Vert_{L^2(-1,1)}}=
		\frac{\ord!}{(2\ord)!}
		\sqrt{\frac{2^{2\ord+1}}{2\ord+1}}= \frac{1}{(2\ord-1)!!\sqrt{\ord+\frac12}}.
		\end{equation*}
	\end{proof}

\bibliographystyle{plain}

\end{document}